\title[Goldman-Turaev formality from the KZ connection]{Goldman-Turaev formality from the Knizhnik-Zamolodchikov connection}
\author{Anton Alekseev}
\address{Department of Mathematics, University of Geneva, 2-4 rue du Li\`evre, c.p. 64, 1211 Geneva, Switzerland}
\email{Anton.Alekseev@unige.ch}
\author{Florian Naef}
\address{Department of Mathematics, University of Geneva, 2-4 rue du Li\`evre, c.p. 64, 1211 Geneva, Switzerland}
\email{Florian.Naef@unige.ch}
\DeclareMathOperator{\Tr}{Tr}
\DeclareMathOperator{\dlog}{dlog}
\DeclareMathOperator{\Hol}{Hol}
\newcommand{\ot}{\leftarrow}
\newtheorem{Thm}{Theorem}[section]
\newtheorem{Prop}[Thm]{Proposition}
\newtheorem{Lem}[Thm]{Lemma}
\newtheorem{Rem}[Thm]{Remark}
\begin{document}
\begin{abstract}
For an oriented 2-dimensional manifold $\Sigma$ of genus $g$ with $n$ boundary components the space $\mathbb{C}\pi_1(\Sigma)/[\mathbb{C}\pi_1(\Sigma), \mathbb{C}\pi_1(\Sigma)]$ carries the Goldman-Turaev Lie bialgebra structure defined in terms of intersections and self-inter\-sections of curves. Its associated graded Lie bialgebra (under the natural filtration) is described by cyclic words in $H_1(\Sigma)$ and carries the structure of a necklace Schedler Lie bialgebra. The isomorphism between these two structures in genus zero has been established in \cite{mas} using Kontsevich integrals and in \cite{genus0} using solutions of the Kashiwara-Vergne problem.

In this note we give an elementary proof of this isomorphism over $\mathbb{C}$. It uses the Knizhnik–Zamolodchikov connection on $\mathbb{C}\backslash\{ z_1, \dots z_n\}$. We show that the isomorphism naturally depends on the complex structure on the surface. 
The proof of the isomorphism for Lie brackets is a version of the classical result by Hitchin \cite{hitchin}. Surprisingly, it turns out that a similar proof applies to cobrackets.

Furthermore, we show that the formality isomorphism constructed in this note coincides with the one defined in \cite{genus0} if one uses the solution of the Kashiwara-Vergne problem corresponding to the Knizhnik-Zamolodchikov associator.
\end{abstract}

\maketitle

\section{Holonomy maps}

In this section we recall the definition of the Knizhnik-Zamolodchikov connection and define the holonomy map using iterated integrals.

\subsection{The Knizhnik-Zamolodchikov connection}
For $n \in \mathbb{Z}_{\geq 1}$ let $\mathfrak{t}_{n+1}$ be the Drinfeld-Kohno Lie algebra of infinitesimal braids. It has generators $t_{ij}=t_{ji}$ with $i \neq j$ for $i,j=1, \dots, n+1$ and relations $[t_{ij}, t_{kl}]=0$ for all quadruples of distinct labels $i,j,k,l$ and $[t_{ij}+t_{ik}, t_{jk}]=0$ for all triples $i,j,k$. The Lie subalgebra of $\mathfrak{t}_{n+1}$ generated by $a_i=t_{i(n+1)}$ for $i=1, \dots, n$ is a free Lie algebra with $n$ generators.

There is a canonical flat Knizhnik-Zamolodchikov (KZ) connection on the configuration space ${\rm Conf}_{n+1}(\mathbb{C})$ of $n+1$ points
$z_1, \dots, z_{n+1} \in \mathbb{C}$ with values in $\mathfrak{t}_{n+1}$:
$$
d+A_{\rm KZ} = d+ \frac{1}{2\pi i} \, \sum_{i<j} t_{ij} \, d\log(z_i-z_j).
$$
The fiber of the forgetful map ${\rm Conf}_{n+1}(\mathbb{C}) \to {\rm Conf}_n(\mathbb{C})$ (by forgetting the point $z=z_{n+1}$) over $(z_1, \dots, z_n) \in {\rm Conf}_n(\mathbb{C})$ is given by $\Sigma = \mathbb{C} \setminus \{ z_1, \dots, z_n \}$. The restriction of the KZ connection to the fiber is of the form:
\begin{equation} \label{connection}
d+A = d + \frac{1}{2\pi i} \, \sum_{i=1}^n a_i \, d\log(z-z_i).
\end{equation}
Note that the 1-forms $\frac{1}{2\pi i} \, d\log(z-z_i)$ form a basis in the cohomology  $H^1(\Sigma)$. Then, we can naturally view $a_i$'s as a basis of $H_1=H_1(\Sigma)$. In fact, this is a natural basis of $H_1$ given by the cycles around the marked points.
It is convenient to use the notation
$$
A(z) = \frac{1}{2\pi i} \sum_{i=1}^n a_i d\log(z -z_i) \in \Omega^1(\Sigma) \otimes H_1.
$$

\subsection{Free algebras and iterated integrals}
The degree completed Hopf algebra $TH_1$ over $\mathbb{C}$ is naturally isomorphic to the completed universal enveloping algebra of the free Lie algebra $\mathbb{L}(a_1, \dots, a_n)$ with generators $a_1, \dots, a_n$. Group-like elements in $TH_1$ form a group $G_n=\exp(\mathbb{L}(a_1, \dots, a_n))$ isomorphic to $\mathbb{L}(a_1, \dots, a_n)$ equipped with the group law defined by the Baker-Campbell-Hausdorff series.

For a path $\gamma$ parametrized by  $z: [0,1] \to \Sigma$, we define the holonomy of the connection $d+A$ using interated integrals:
$$
\begin{array}{lllll}
{\rm Hol}_\gamma & = & \sum_{k=0}^\infty \int_{1\geq t_1 \geq \dots \geq t_k \geq 0} A(z(t_1)) \dots A(z(t_k)) & & \\
& = & \sum_{\mathbf{i}} a_{i_1} \dots a_{i_k} \, \int_\gamma \frac{dz(t_1)}{z(t_1)-z_{i_1}} \circ \dots 
\circ \frac{dz(t_k)}{z(t_k)-z_{i_k}} & \in & G_n,
\end{array}
$$
where $\mathbf{i}=(i_1, \dots, i_k)$.
Since the connection $d+A$ is flat, the holonomy ${\rm Hol}_\gamma$ is independent of the homotopy deformations of $\gamma$ with fixed end points. Hence, for small deformations of $\gamma$ one can view ${\rm Hol}_\gamma$ as a functions of its endpoints. The de Rham differential of this function is given by 
$$
d \, {\rm Hol}_\gamma = A(z(1)) \, {\rm Hol}_\gamma - {\rm Hol}_\gamma \, A(z(0)).
$$
Let $\gamma$ be a closed path and consider the path $\gamma(s)$ which starts at $z(s)$, follows $\gamma$ and ends at $z(s)$. The corresponding holonomy is denoted by ${\rm Hol}_\gamma(s) = {\rm Hol}_{\gamma(s)}$ and its de Rham differential has the form
\begin{equation}  \label{Hol(s)}
d \, {\rm Hol}_\gamma(s) = A(z(s))\, {\rm Hol}_\gamma - {\rm Hol}_\gamma \, A(z(s)).
\end{equation}
For a closed path $\gamma$ and two points $s \neq t \in S^1$ we denote by $\gamma(t \ot s)$ the oriented path starting at $z(s)$, following $\gamma$ and ending at $z(t)$. The corresponding holonomy is denoted ${\rm Hol}_\gamma(t \ot s)={\rm Hol}_{\gamma(t \ot s)}$ and its de Rham differential is given by
$$
d \, {\rm Hol}_\gamma(t \ot s) = A(z(t)) \, {\rm Hol}_\gamma(t \ot s) -  {\rm Hol}_\gamma(t \ot s) \, A(z(s)).
$$

Choose a base point $p\in \Sigma$ and denote by $\pi_1=\pi_1(\Sigma, p)$ the fundamental group of $\Sigma$ with base point $p$. Let $\gamma$ be a closed path based at $p$. The map
$$
W: \gamma \mapsto {\rm Hol}_\gamma
$$
descends to a group homomorphism $\pi_1 \to G_n$ and induces an isomorphism of completed Hopf algebras $\mathbb{C}\pi_1 \to TH_1$.

Denote by 
$$
|\mathbb{C} \pi_1| = \mathbb{C} \pi_1/[\mathbb{C} \pi_1,\mathbb{C} \pi_1]
$$
the space spanned by conjugacy classes in $\pi_1$ and similarly 
$$
|TH_1| = TH_1/[TH_1, TH_1]
$$ 
the space spanned by cyclic words in $H_1$. Note that $|\mathbb{C} \pi_1|$ is also isomorphic to the  vector space spanned by free homotopy classes of loops in $\Sigma$. The  map $W$ induces a map $|W|: |\mathbb{C} \pi_1| \to |TH_1|$ which is independent of the base point $p$. In what follows we will study properties of this map.

\begin{Rem}
For higher genus surfaces, iterated integrals of the harmonic Magnus connection were studied in \cite{Kawazumi}. It is not known whether the harmonic Magnus expansion gives rise to a Goldman-Turaev formality map.
\end{Rem}

\section{Goldman-Turaev formality}

The space $|\mathbb{C} \pi_1|$ carries the canonical Goldman-Turaev Lie bialgebra structure which depends on the framing of $\Sigma$. Moreover, it is canonically filtered by powers of the augmentation ideal of the group algebra $\mathbb{C}\pi_1$. The space $|TH_1|$ carries the necklace Schedler Lie bialgebra structure (see \cite{Schedler}) which depends on the choice of a basis in $H_1$. The main result of this note is an elementary proof of the following theorem:

\begin{Thm} \label{main}
The map $|W|: |\mathbb{C} \pi_1| \to |TH_1|$ is an isomorphism of Lie bialgebras for the Goldman-Turaev Lie bialgebra structure on (completed) $|\mathbb{C}\pi_1|$ defined by the blackboard framing and the necklace Schedler Lie bialgebra structure on $|TH_1|$ defined by the natural basis $\{ a_1, \dots, a_n\} \subset H_1$. 
\end{Thm}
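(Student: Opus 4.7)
The plan is to proceed in three stages.

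\emph{Stage 1 (iterated integral isomorphism).} Since the 1-forms $\frac{1}{2\pi i}\,d\log(z-z_i)$ form a basis of $H^1(\Sigma)$, Chen's iterated integral theorem applied to the flat connection $d+A$ shows that $W:\mathbb{C}\pi_1\to TH_1$ is an isomorphism of complete Hopf algebras (this is already stated in the previous section). Consequently $|W|$ is a filtered linear isomorphism respecting the associative structures, so only the compatibilities with the brackets and cobrackets remain.

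\emph{Stage 2 (Goldman bracket, after Hitchin).} Given two transverse closed loops $\alpha,\beta$ parametrized by $z_\alpha,z_\beta:S^1\to\Sigma$, I would build a $|TH_1|$-valued 2-form on the torus $S^1\times S^1$ from ${\rm Hol}_\alpha(s)$ and ${\rm Hol}_\beta(t)$ (transported to a common base point) together with the two factors $A(z_\alpha(s))\,ds$ and $A(z_\beta(t))\,dt$. Using the differential formula (\ref{Hol(s)}) and cyclic symmetry of $|\cdot|$, the resulting 2-form is exact away from the codimension-two locus where $z_\alpha(s)=z_\beta(t)$; integrating by Stokes reduces the double integral to a sum of contributions localized at the transverse intersection points $p\in\alpha\cap\beta$. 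Each contribution equals $\pm\bigl|{\rm Hol}_{\alpha_p}{\rm Hol}_{\beta_p}\bigr|$, with sign determined by the blackboard framing, so the total is $|W|$ applied to the Goldman bracket $\{|\alpha|,|\beta|\}$. On the other hand, expanding the holonomies into iterated integrals and evaluating the same double integral word by word produces the Schedler necklace bracket of $|W(\alpha)|$ and $|W(\beta)|$. Comparing the two computations gives the bracket identity.

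\emph{Stage 3 (Turaev cobracket).} For a single closed loop $\alpha$, one runs the analogous argument with a $|TH_1|\otimes|TH_1|$-valued 2-form on $S^1\times S^1$ built from the two partial holonomies ${\rm Hol}_\alpha(s\ot t)$ and ${\rm Hol}_\alpha(t\ot s)$. Off-diagonal singularities then localize at the self-intersections of $\alpha$, i.e.\ at pairs $s\neq t$ with $z_\alpha(s)=z_\alpha(t)$; the corresponding residues yield the Turaev cobracket on the topological side and the Schedler cobracket on the combinatorial side.

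\emph{Main obstacle.} The delicate point is Stage~3. Unlike in Stage~2, the 2-form is a priori singular along the entire diagonal $\Delta=\{s=t\}$, not only at honest self-intersections of $\alpha$; one must regularize the near-diagonal contribution. This is precisely where the framing of $\Sigma$ enters the picture, and the heart of the argument is to verify that the natural principal-value regularization of the KZ iterated integrals corresponds to the blackboard framing and produces the correct cobracket formula on both sides. The fact that essentially the same mechanism based on (\ref{Hol(s)}) handles brackets and cobrackets uniformly is the surprising feature indicated in the introduction.
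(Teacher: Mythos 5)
Your proposal follows essentially the same route as the paper: Chen's theorem for the linear isomorphism, then a Stokes-theorem localization of a holonomy-valued 2-form on $S^1\times S^1$ to the (self-)intersection points for the bracket and cobracket, with the near-diagonal contribution in the cobracket case producing the rotation-number term tied to the blackboard framing. The paper implements the exactness step you describe via the explicit identity $\{A(z),A(w)\}=\frac{1}{2\pi i}\bigl(1\wedge(A(z)-A(w))\bigr)\,d\log(z-w)$ and regularizes the diagonal by excising a strip $t=s\pm\varepsilon$, but these are precisely the details your outline calls for.
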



\subsection{Kirillov-Kostant-Souriau double bracket}

Recall the Kirillov-Kostant-Souriau (KKS) double bracket 
(in the sense of van den Bergh \cite{vdb}) on $TH_1$ which is completely determined by its values on generators:
$$
a_i \otimes a_j \mapsto \{a_i, a_j\} = \delta_{ij} (1 \otimes a_i - a_i \otimes 1) =
\delta_{ij} \, (1 \wedge a_i). 
$$
One of the key observations is the following lemma (reinterpreting \cite{fockrosly}):
\begin{Lem}  \label{bracketA}
\begin{align}
\{A(z), A(w) \} 
&= \frac{1}{2\pi i}(1 \wedge (A(z)-A(w)) )\, d\log(z-w) \nonumber
\end{align}
\end{Lem}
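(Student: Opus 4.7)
My plan is to reduce the identity to a partial-fractions manipulation on scalar 1-forms, treating the double bracket as bilinear over the de Rham algebra of $\Sigma\times\Sigma$ and only acting on the $TH_1$-valued coefficients.

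First I would expand the left-hand side. Since $A(z)=\frac{1}{2\pi i}\sum_i a_i\,d\log(z-z_i)$ and $A(w)=\frac{1}{2\pi i}\sum_j a_j\,d\log(w-z_j)$, the bilinearity of the KKS double bracket together with the defining relation $\{a_i,a_j\}=\delta_{ij}(1\wedge a_i)$ collapses the double sum to
$$
\{A(z),A(w)\}=\frac{1}{(2\pi i)^2}\sum_{i=1}^n (1\wedge a_i)\,\frac{dz\,dw}{(z-z_i)(w-z_i)}.
$$
This is just bookkeeping, but it already isolates the essential kernel $\frac{dz\,dw}{(z-z_i)(w-z_i)}$.

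The key step is then the classical partial-fractions identity
$$
\frac{1}{(z-z_i)(w-z_i)}=\frac{1}{z-w}\left(\frac{1}{w-z_i}-\frac{1}{z-z_i}\right),
$$
which I would rewrite in differential-form language as
$$
\frac{dz\,dw}{(z-z_i)(w-z_i)}=\left(\frac{dw}{w-z_i}-\frac{dz}{z-z_i}\right)\wedge\frac{dz-dw}{z-w},
$$
using $dz\wedge dz=dw\wedge dw=0$ to eliminate the unwanted cross terms. Summing against $(1\wedge a_i)/(2\pi i)^2$ and recognizing $\sum_i a_i\,dz/(z-z_i)=2\pi i\,A(z)$, $\sum_i a_i\,dw/(w-z_i)=2\pi i\,A(w)$, and $(dz-dw)/(z-w)=d\log(z-w)$ then gives precisely $\frac{1}{2\pi i}(1\wedge(A(z)-A(w)))\,d\log(z-w)$, matching the claimed right-hand side (with a single overall factor of $1/(2\pi i)$ after one power of $2\pi i$ is absorbed into recognizing $A$).

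The only thing that requires a bit of care is the sign accounting in the wedge products and the placement of $(1\wedge -)$ inside the bracket notation, which is a matter of convention in the van den Bergh formalism; once one fixes that $\{a_i,a_i\}=1\otimes a_i-a_i\otimes 1$ is evaluated componentwise and that differential-form factors commute past the tensor slots, there is no genuine obstacle. The main conceptual point I would emphasize is that the whole lemma is equivalent to the partial-fraction identity above, which is exactly the Fock--Rosly statement that the KKS Poisson structure is induced by the rational $r$-matrix $1/(z-w)$.
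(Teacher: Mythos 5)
Your proof follows essentially the same route as the paper's: expand by bilinearity so that $\delta_{ij}$ collapses the double sum, then apply the partial-fraction identity to the kernel $\frac{1}{(z-z_i)(w-z_i)}$ and reassemble the pieces into $A(z)$, $A(w)$ and $d\log(z-w)$. The only blemish is a sign slip in the displayed wedge identity: since $\left(\frac{dw}{w-z_i}-\frac{dz}{z-z_i}\right)\wedge\frac{dz-dw}{z-w}=-\frac{dz\wedge dw}{(z-z_i)(w-z_i)}$, the first factor should read $\frac{dz}{z-z_i}-\frac{dw}{w-z_i}$, which is what actually produces the final answer $\frac{1}{2\pi i}\bigl(1\wedge(A(z)-A(w))\bigr)\,d\log(z-w)$ that you correctly state.
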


\begin{proof}
The proof is by the direct computation:
\begin{align*}
\{ A(z), A(w)\} & = \frac{1}{(2\pi i)^2} \, \{ \sum_i a_i d\log(z-z_i), \sum_j a_j d\log(w-z_j) \} \\
& =  \frac{1}{(2\pi i)^2} \, \sum_i (1 \wedge a_i) d\log(z-z_i) d\log(w-z_i) \\
& =  \frac{1}{(2\pi i)^2} \, \sum_i (1 \wedge a_i) (d\log(z-z_i)- d\log(w-z_i)) d\log(z-w) \\
& =  \frac{1}{2\pi i}(1 \wedge (A(z)-A(w))) d\log(z-w).
\end{align*}
\end{proof}

\subsection{Variations of the holonomy map}
In order to proceed we will need some notation from non-commutative differential geometry. Let $\partial_i: TH_1 \to TH_1 \otimes TH_1$ for $i=1, \dots, n$ denote double derivations with the property $\partial_i a_j = \delta_{ij} (1 \otimes 1)$. They induce maps (denoted by the same letter) $\partial_i: |TH_1| \to TH_1$, and by composition
$\partial_i \partial_j: |TH_1| \to TH_1 \otimes TH_1$. 
In what follows we will use the formulas for the first and second derivatives of the elements
 $|W_\gamma|$ for $\gamma$ a closed path parametrized by a map $z: S^1 \to \Sigma$:
\begin{equation} \label{derivatives}
\begin{aligned}  
\partial_j |W_\gamma| & = \int_{S^1} \, {\rm Hol}_\gamma(s) \, d\log(z(s) - z_j), \\
\partial_i \partial_j |W_\gamma| & = \int_{S_1 \times S_1} \left( {\rm Hol}_\gamma(t \ot s) \otimes {\rm Hol}_\gamma(s \ot t)\right)  \, d\log(z(s) -z_i)d \log(z(t)-z_j),
\end{aligned}
\end{equation}
where in the expression for the second derivative the torus $S^1 \times S^1$ is oriented by the volume form $dsdt$.

\subsection{Necklace Schedler Lie bialgebra}
We will also make use of the maps 
$\Tr: TH_1 \otimes TH_1 \to |TH_1|$ given by $\Tr(a \otimes b)=|ab|$ and 
$\Tr^{12}: TH_1 \otimes TH_1 \to |TH_1| \otimes |TH_1|$ which is the component-wise projection
$\Tr^{12}(a\otimes b) = |a| \otimes |b|$. In terms of the KKS double bracket, the necklace Schedler Lie bialgebra structure looks as follows:
for $\psi, \psi' \in |TH_1|$ we have
\begin{align*}
[\psi, \psi'] &= \sum_{ij} \Tr \left( (\partial_i \psi\otimes \partial_j \psi') \{ a_i, a_j \} ) \right) \\
\delta \psi &= \sum_{ij} \Tr^{12} \left( (\partial_i \partial_j \psi) \{ a_i, a_j \} \right).
\end{align*}

\begin{Rem}
The necklace Lie algebra structure corresponding to the KKS double bracket first appeared in \cite{Drinfeld}. It was then generalized to other double brackets and other quivers (a quiver is a part of a general definition of a necklace Lie algebra) in \cite{Kontsevich, Ginzburg, Bocklandt}. The first description of the cobracket is in \cite{Schedler}. The formulas above represent the KKS necklace Schedler Lie bialgebra structure in a form convenient for our purposes.
\end{Rem}

\subsection{Proof of Theorem \ref{main}}
We are now ready to give a proof of Theorem \ref{main}:

\begin{proof}[Proof of the Lie homomorphism]

 Let $\gamma$ and $\gamma'$ be two closed loops in $\Sigma$ with a finite number of transverse intersections  parametrized by $z, w: [0,1] \to \Sigma$. Denote by $(s_i, t_i)$ pairs of parameters corresponding to intersection points, $p_i=z(s_i) = w(t_i)$ and by $D_i(\varepsilon) \subset S^1 \times S^1$ small discs of radius $\varepsilon$ around $(s_i,t_i)$ with boundaries small circles $S_i(\varepsilon)$ positively oriented under the orientation defined by the volume form $dsdt$ on the torus $S^1 \times S^1$. Let $\epsilon_i =+1$ if the orientation induced by the form $dsdt$ on $T_{p_i}\Sigma$ coincides with the blackboard orientation and $\epsilon_i=-1$ otherwise.

We compute the necklace Lie bracket of the elements $|W_\gamma|$ and $|W_{\gamma'}|$:
\begin{align*}
    [|W_\gamma|, |W_{\gamma'}|] &= \Tr\left( (\partial_i |W_\gamma| \otimes \partial_j |W_{\gamma'}|) \, \{ a_i, a_j\} \right) \\
    & = \int_{S^1 \times S^1} \Tr\left( (\Hol_\gamma(s) \otimes \Hol_{\gamma'}(t))  \{A(z(s)), A(w(t)) \} \right) \\
    &= \frac{1}{2\pi i}\int_{S^1 \times S^1} \Tr\Big( \big(\Hol_\gamma(s) \otimes \Hol_{\gamma'}(t)\big)  \big( 1 \wedge (A(z(s)) - A(w(t)) \big)  \Big)  \dlog(z-w) \\
    &=\lim_{\varepsilon \to 0} \frac{1}{2\pi i}\int_{S^1 \times S^1 \backslash \cup_i D_i(\varepsilon)} d |\Hol_\gamma(s) \Hol_{\gamma'}(t)| \dlog(z-w) \\
    &= - \sum_i |\Hol_\gamma(s_i) \Hol_{\gamma'}(t_i) | \, \lim_{\varepsilon \to 0} \,  \frac{1}{2\pi i} \, \int_{S_i(\varepsilon)}\,  d\,\log(z-w) \\
    &= \sum_i |\Hol_\gamma(s_i) \Hol_{\gamma'}(t_i) | \epsilon_i .
\end{align*}

Here in the first line we use the definition of the necklace Schedler Lie bracket, in the second line the expression \eqref{derivatives} for the first derivative of $|W_\gamma|$ and the definition of $A(z)$, in the third line Lemma \ref{bracketA},  in  the fourth line  formula \eqref{Hol(s)} for the differential of the holonomy map, in the fifth line the Stokes formula and in the sixth line the residue formula. Note that the result of the calculation is exactly the expression for the image under the map $W$ of the Goldman bracket \cite{Goldman} of the loops $\gamma$ and $\gamma'$.

\end{proof}

\begin{proof}[Proof of the cobracket homomorphism]

Next, consider a closed path $\gamma$ parametrized by $z: S^1 \to \Sigma$. Assume that $\gamma$ is an immersion with finitely many transverse self-intersections $p_i=z(s_i)=z(t_i)$ (it is convenient to have each point appear twice in the count with $s_j=t_i$ and $t_j=s_i$). Denote $\alpha^{\pm}(\varepsilon)$ the circles on $ S^1 \times S^1$ corresponding to the parameters $t=s \pm \varepsilon$ and by $\beta$ the strip between them. Similar to the proof above, let $D_i(\varepsilon)$ be small discs of radius $\varepsilon$ around the self-intersection points, $S_i(\varepsilon)$ their positively oriented boundaries and $\epsilon_i$ signs of self-intersections defined as above. Note that $\epsilon_j = - \epsilon_i$ for $i$ and $j$ representing the same self-intersection point.

We compute the necklace cobracket of the element $|W_\gamma|$:
\begin{align*}
    \delta |W_\gamma| &= \Tr^{12} \left( \partial_i \partial_j |W_\gamma| \, \{ a_i, a_j\} \right) \\
    & = \int_{S^1 \times S^1} \Tr^{12}(\Hol_\gamma(t \ot s) \otimes \Hol_\gamma(s \ot t)) \{A(z(s)), A(z(t)) \}) \\
    &= \lim_{\varepsilon \to 0} \frac{1}{2\pi i}\int_{S^1 \times S^1 \backslash \beta(\varepsilon) \cup (\cup_i D_i(\varepsilon))} d (\Tr^{12}(\Hol_\gamma(t \ot s) \otimes \Hol_\gamma(s \ot t)) \dlog(z(s)-z(t)) \\
 & = - \lim_{\varepsilon \to 0} \frac{1}{2 \pi i}\int_{ \cup_i S_i(\varepsilon) \cup \alpha^+(\varepsilon) \cup \alpha^-(\varepsilon) } (|\Hol(t \ot s)| \otimes |\Hol(s \ot t)|) \dlog(z(s)-z(t)) \\
 & = \sum_i |\Hol(t_i \ot s_i)| \otimes |\Hol(s_i \ot t_i)| \epsilon_i +  (1 \wedge |W_\gamma|) \frac{1}{2\pi i}\int_\gamma \, d \, \log(\dot{z}(t)) \\
&= 
\sum_i |\Hol(t_i \ot s_i)| \otimes |\Hol(s_i \ot t_i)| \epsilon_i + (|W_\gamma| \wedge |1|) \operatorname{rot}(\gamma).
\end{align*}
Here in the first line we use the definition of the necklace Schedler cobracket, in the second line the expression \eqref{derivatives} for the second derivative of $|W_\gamma|$, in the third line Lemma \ref{bracketA} and the expression \eqref{Hol(s)} for the de Rham differential of the holonomy map, in the fourth line the Stokes formula and in the fifth line the definition of the rotation number ${\rm rot}(\gamma)$. The resulting expression is exactly the Turaev cobracket of the path $\gamma$ with respect to the blackboard framing which defines the rotation  number.
\end{proof}

\begin{Rem}
In the proof above, the calculation of the Goldman bracket of the holonomies follows the standard scheme, see \cite{hitchin}, \cite{AM}. The calculation of the Turaev cobracket uses the same techniques, but it seems to be new.
\end{Rem}

\begin{Rem}
In \cite{Turaev}, the cobracket is defined on the space $|\mathbb{C}\pi_1|/\mathbb{C} |1|$, where $|1|$ is the homotopy class of the trivial loop on $\Sigma$. If one fixes a framing on $\Sigma$, this definition can be lifted  to $|\mathbb{C}\pi_1|$ by the formula above (for details, see \cite{Kawazumi2}).
\end{Rem}

\section{The $z$-dependence of the holonomy map}

In this section, we study the dependence of the holonomy map $|W|: |\mathbb{C} \pi_1| \to | TH_1| $ on the positions of the poles $z_1, \dots, z_n$ in the connection \eqref{connection}. We will use the notation $|W^{(z)}|$ to make this $z$-dependence more explicit.

\subsection{The bundle $\mathcal{G}_n$ }

Consider the natural fibration ${\rm Conf}_{n+1} \to {\rm Conf}_n$ and assign to a point of the base $(z_1, \dots. z_n)$ the Goldman-Turaev Lie algebra of the fiber $|\mathbb{C}\pi_1(\Sigma)|$ for $\Sigma=\mathbb{C}\backslash \{ z_1, \dots, z_n\}$. This defines a  bundle of Lie bialgebras $\mathcal{G}_n$ over ${\rm Conf}_n$. 

In a neighborhood of a point $z=(z_1, \dots, z_n) \in {\rm Conf}_n$ the bundle $\mathcal{G}_n$ can be trivialized as follows. Choose small open disks $D_i$ centered at $z_i$ such that $D_i \cap D_j =\emptyset$ for all $i\neq j$ and define a local chart $U^{(z)}$ around $z$ which consists of points $w=(w_1, \dots, w_n) \in {\rm Conf}_n$ such that $w_i \in D_i$. Note that the surface with boundary $\tilde{\Sigma}=\mathbb{C} \backslash \cup_i D_i$ is homotopically equivalent to $\Sigma^{(w)}=\mathbb{C}\backslash \{ w_1, \dots, w_n\}$ for all $w \in U^{(z)}$ via the inclusion $\tilde{\Sigma} \subset \Sigma^{(w)}$. The canonical isomorphism $|\mathbb{C} \pi_1(\tilde{\Sigma})| \cong |\mathbb{C}\pi_1(\Sigma^{(w)})|$  defines a trivialization of $\mathcal{G}_n$ in this chart.

The bundle $\mathcal{G}_n$ carries a canonical flat connection defined by the following family of local flat sections. 
Let $\gamma: S^1 \to \mathbb{C}$ be a closed curve. Then, the homotopy class $[\gamma]$ defines a flat section over the complement of the subset of ${\rm Conf}_n$ where $z_i \in \gamma(S^1)$ for some $i$. Moreover, it is easy to see that there is a unique local flat section passing through each point of $\mathcal{G}_n$. 

The canonical flat connection  is compatible with the Goldman-Turaev Lie bialgebra structure on the fibers. Indeed, on flat sections over $U^{(z)}$ defined by closed curves in $\tilde{\Sigma}$, the Goldman bracket and Turaev cobracket take the same value independent of $w$ (sufficiently close to $z$).

The flat bundle $\mathcal{G}_n$ admits the following description: the fundamental group of ${\rm Conf}_n$ is the pure braid group ${\rm PB}_n$. The bundle $\mathcal{G}_n$ is induced by the natural representation of ${\rm PB}_n$ on the free group $\pi_1(\Sigma)$.

\subsection{The bundle $\mathcal{H}_n$}

The natural action of ${\rm PB}_n$ on the homology $H_1(\Sigma)$ is trivial. This trivial action lifts to $|TH_1|$ and gives rise to the trivial vector bundle $\mathcal{H}_n \to {\rm Conf}_n$ with fiber $|TH_1|$.  

The bundle $\mathcal{H}_n$ carries a canonical flat connection. Recall that the Drinfeld-Kohno Lie algebra $\mathfrak{t}_n$ acts on the free Lie algebra $\mathbb{L}(a_1, \dots, a_n)$ via the identification $a_i=t_{i(n+1)} \in \mathfrak{t}_{n+1}$. Since this action is induced by the adjoint action of $\mathfrak{t}_{n+1}$ on itself, we will use the notation ${\rm ad}$. For instance, ${\rm ad}(t_{12}) a_1 = [t_{12}, t_{1(n+1)}] = 
-[t_{2(n+1)}, t_{1(n+1)}] = - [a_2, a_1]$. In particular, the central element of $\mathfrak{t}_n$
$$
c= \sum_{i<j} t_{ij}
$$
acts by the inner derivation ${\rm ad}(c) a= [a, a_1 + \dots + a_n]$. 

The canonical connection on $\mathcal{H}_n$ is (a version of) the KZ connection:
\begin{equation} \label{nabla_n}
\nabla_n=d + \frac{1}{2\pi i} \sum_{i<j} {\rm ad}(t_{ij}) d\log(z_i-z_j).
\end{equation}
It is flat because the original KZ connection is flat and because the map ${\rm ad}$ is a Lie algebra homomorphism.

\begin{Rem} \label{krv}
The image of the Lie algbera $\mathfrak{t}_n$ under the map ${\rm ad}$ is contained in the Kashiwara-Vergne Lie algebra $\mathfrak{krv}_n$ acting on $|TH_1|$ by derivations of the necklace Schedler Lie bialgebra structure \cite{genus0}. Hence, the flat connection \eqref{nabla_n} is compatible with the Lie bialgebra structure on the fibers of $\mathcal{H}_n$. 
\end{Rem}

\subsection{The holonomy is an isomorphism of flat bundles with connection}
For each $z=(z_1, \dots, z_n) \in {\rm Conf}_n$ the holonomy map $|W^{(z)}|$ maps the fiber $|\mathbb{C}\pi_1(\Sigma^{(z)})|$ of $\mathcal{G}_n$ to the fiber $|TH_1|$ of $\mathcal{H}_n$. This collection of maps for  $z \in {\rm Conf}_n$ defines a smooth bundle map $|W|: \mathcal{G}_n \to \mathcal{H}_n$.

\begin{Thm} \label{KZ}
The bundle map $|W|$ intertwines the canonical flat connection on $\mathcal{G}_n$ and the connection $\nabla_n$ on $\mathcal{H}_n$.
\end{Thm}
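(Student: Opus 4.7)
The strategy is to exploit flatness of the full KZ connection $d + A_{\mathrm{KZ}}$ on $\mathrm{Conf}_{n+1}$, which already packages both the fibrewise form $A$ used to define $\mathrm{Hol}_\gamma$ and (via its base-direction component) the adjoint-action data appearing in $\nabla_n$. First I would split
\[
A_{\mathrm{KZ}} = A + A_{\mathrm{base}}, \qquad A_{\mathrm{base}} = \frac{1}{2\pi i}\sum_{1 \leq i < j \leq n}t_{ij}\, d\log(z_i - z_j) - \frac{1}{2\pi i}\sum_{i=1}^{n}a_i\, \frac{dz_i}{z - z_i},
\]
obtained by separating terms in the KZ sum by whether they involve $dz_{n+1}$. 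Note that the first sum involves the ``outer'' generators $t_{ij}$ (with $i,j\leq n$), whose $\mathrm{ad}$-action on $\mathbb{L}(a_1,\dots,a_n)$ is generically not inner, while the second involves $a_i = t_{i,n+1}$, which act by inner derivations.

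A flat section of $\mathcal{G}_n$ over $U^{(z)}$ is represented by a fixed closed curve $\gamma : S^1 \to \tilde\Sigma$ based at some $p \in \tilde\Sigma$. The key step is to compute the $w$-derivative of $\mathrm{Hol}_\gamma^{(w)}$ by identifying it with the $A_{\mathrm{KZ}}$-holonomy of the vertical loop $\{w\}\times\gamma \subset \mathrm{Conf}_{n+1}$ (this identification is immediate since $A_{\mathrm{KZ}}$ restricted to a vertical tangent vector agrees with $A$). Flatness of $A_{\mathrm{KZ}}$, together with the cylinder $[w,w+\varepsilon v]\times\gamma$ realising a free homotopy between the loops $\{w\}\times\gamma$ and $\{w+\varepsilon v\}\times\gamma$, upgraded to a based homotopy at $(w,p)$ using the straight-line path $t \mapsto (w+tv,p)$ in the base, yields the conjugation formula
\[
d_w\,\mathrm{Hol}_\gamma^{(w)} = [A_{\mathrm{base}}|_{(w,p)},\,\mathrm{Hol}_\gamma^{(w)}]
\]
in $U(\mathfrak{t}_{n+1})$. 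As a sanity check, the same formula can be recovered by a direct computation: $\partial_{z_j} A = -\frac{1}{2\pi i}a_j\, d_z(z - z_j)^{-1}$, inserted into the variation formula for the path-ordered exponential and integrated by parts along $\gamma$, gives a boundary term $-\frac{1}{2\pi i}(p-z_j)^{-1}[a_j,\mathrm{Hol}_\gamma]$ and an interior term which, using the Leibniz identity $[t_{ij},\mathrm{Hol}_\gamma]=\int K\,[t_{ij},A]\,H$ and partial fractions, reassembles into the right-hand side above.

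Finally, project this identity through $|\,\cdot\,|: TH_1 \to |TH_1|$. The inner-derivation contributions $[a_i,\mathrm{Hol}_\gamma^{(w)}]$ are commutators of elements of the subalgebra $TH_1$ and therefore vanish in $|TH_1|$; only the outer terms $[t_{ij},\mathrm{Hol}_\gamma^{(w)}] = \mathrm{ad}(t_{ij})(\mathrm{Hol}_\gamma^{(w)})$ survive and descend to $\mathrm{ad}(t_{ij})|\mathrm{Hol}_\gamma^{(w)}|$ by the derivation property. What remains is precisely the flat-section equation $\nabla_n |\mathrm{Hol}_\gamma^{(w)}| = 0$, so $|W|$ sends flat sections of $\mathcal{G}_n$ to flat sections of $(\mathcal{H}_n,\nabla_n)$, which is the content of the theorem. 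The main obstacle is the cylinder/holonomy argument underlying the commutator formula: one must carefully treat the composition convention for iterated integrals dictated by the paper's definition of $\mathrm{Hol}_\gamma$ so that the homotopy between the two based loops produces the correct sign of $[A_{\mathrm{base}},\mathrm{Hol}_\gamma^{(w)}]$ matching $\nabla_n$.
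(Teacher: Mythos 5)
Your proposal is correct and follows essentially the same route as the paper: both exploit flatness of the full KZ connection on ${\rm Conf}_{n+1}$ to express the variation of $\Hol_\gamma$ in the puncture positions as a conjugation by the base-direction holonomy (your infinitesimal commutator $d_w\Hol_\gamma^{(w)}=[A_{\rm base}|_{(w,p)},\Hol_\gamma^{(w)}]$ is exactly the derivative of the paper's formula $\Hol_\gamma(s)=\Hol_w(s\ot 0)\Hol_\gamma(0)\Hol_w(0\ot s)$), and then project to $|TH_1|$ where the inner derivations ${\rm ad}(t_{i(n+1)})={\rm ad}(a_i)$ die and only the $\nabla_n$ terms survive. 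The only difference is presentational: the paper works with the finite conjugation identity and differentiates at $s=0$, while you state the infinitesimal version directly.
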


\begin{proof}
It is sufficient to compare the canonical connections and $\nabla_n$ on local charts. In more detail, let $\gamma: S^1 \to \tilde{\Sigma} = \sigma \backslash \cup_i D_i$  be a closed curve. Then, $[\gamma]$ defines a flat section of $\mathcal{G}_n$ over $U^{(z)}$. It is then sufficient to show that  $|W_\gamma^{(w)}|$ is a $\nabla_n$-flat section of $\mathcal{H}_n$ for $w \in U^{(z)}$.

Let $w: (-1, 1) \to {\rm Conf}_n(\mathbb{C})$ be a smooth path in $U^{(z)}$ with  $w(0)=z$ and the first derivative $w'(0)=u=(u_1, \dots, u_n)$.  Choose a starting point on the curve $\gamma$ and denote the corresponding map by $\widehat{\gamma}: [0,1] \to \tilde{\Sigma}$.

Consider the KZ connection on ${\rm Conf}_{n+1}(\mathbb{C})$ and define the map 
$\mu=w \times \widehat{\gamma}: (-1, 1) \times S^1 \to {\rm Conf}_{n+1}(\mathbb{C})$.
The pull-back of the KZ connection under $\mu$ is flat. For a given $s \in (-1,1)$ the holonomy of the induced connection along $\{ s\} \times \widehat{\gamma}$, denoted by ${\rm Hol}_\gamma(s)$, takes values in $TH_1$ and its projection to $|TH_1|$ coincides with $|W^{(w(s))}|$ (under the identification $t_{i(n+1)} = a_i$).

Since the pull-back connection is flat, the holonomies for different values of $s$ can be compared using the following formula:
$$
{\rm Hol}_\gamma(s) = {\rm Hol}_w(s \leftarrow 0) {\rm Hol}_\gamma(0) {\rm Hol}_w(0 \leftarrow s),
$$
where ${\rm Hol}_w(0 \leftarrow s) = {\rm Hol}_w(s \leftarrow 0)^{-1}$ is the holonomy of the KZ connection for $n+1$ points along the part of the path $w$ between $0$ and $s$. Differentiating this formula in $s$ at $s=0$, we obtain
$$
{\rm Hol}'_\gamma(0) = \frac{1}{2\pi  i} \, \sum_{1\leq i<j \leq n+1} \frac{u_i-u_j}{z_i-z_j} {\rm ad}(t_{ij}){\rm Hol}_\gamma(0),
$$
where $u_{n+1}=0$ and $z_{n+1} = \hat{\gamma}(0)$.
Projecting this equation to $|TH_1|$ yields
$$
(\partial_s |W^{(w(s))}|)_{s=0} = \frac{1}{2\pi i} \, \sum_{1\leq i<j \leq n} \frac{u_i-u_j}{z_i-z_j} {\rm ad}(t_{ij}) |W^{(w(0))}|.
$$
Here we have used the fact that operators ${\rm ad}(t_{i(n+1)})$ are inner derivations of $TH_1$ and they act by zero on $|TH_1|$.
The formula above shows that $|W^{(w(s))}_\gamma|$ is a flat section over $(-1,1)$. Since $w(s)$ was an arbitrary path, this  shows that $|W_\gamma^{(w)}|$ is a flat section for $\nabla_n$. 
\end{proof}

\begin{Rem}
The KZ connection on $\mathcal{H}_n$ is induced by the flat connection  on the trivial bundle $\mathcal{L}_n = {\rm Conf}_n \times \mathbb{L}(a_1, \dots, a_n)$ defined by the same formula \eqref{nabla_n}. 
 
Let $\mathfrak{g}$ be a Lie algebra (possibly $\mathbb{L}(a_1, \dots, a_n)$ itself) and consider the bundle of fiberwise Lie algebra homomorphisms of $\mathcal{L}_n$ into $\mathfrak{g}$. This is the trivial bundle ${\rm Conf}_n \times \mathfrak{g}^n$, where we identify a Lie algebra homomorphism $x: \mathbb{L}(a_1, \dots, a_n) \to \mathfrak{g}$ with the $n$-tuple $(x_1, \dots, x_n) = (x(a_1), \dots, x(a_n))$. The KZ connection induces a connection on this bundle which is computed as follows:
\begin{align*}
    ((d+ A_{\rm KZ})x)(a_k) &= dx_k + \frac{1}{2\pi i}\sum_{i<j} (t_{ij}.x)(a_k)d\log(z_i - z_j) \\
    &= dx_k + \frac{1}{2\pi i}\sum_{i<j} x(-[t_{ij},a_k])d\log(z_i - z_j) \\
    &= dx_k + \frac{1}{2\pi i}\sum_{i} x([a_i,a_k])d\log(z_i - z_k)\\
    &= dx_k + \frac{1}{2\pi i}\sum_{i} [x_i, x_k] d\log(z_i - z_k).
\end{align*}
Flat sections of this connection are solutions of the  Schlesinger equations for isomonodromic deformations:
\begin{equation}
    \label{isomo}
\frac{\partial x_k}{\partial z_i} = \frac{1}{2\pi i} \frac{[x_k, x_i]}{z_k - z_i}, \hskip 0.3cm
\frac{\partial x_k}{\partial z_k} = - \frac{1}{2\pi i} \sum_{i \neq k} \frac{[x_k, x_l]}{z_k-z_l}.
\end{equation}
For $\mathfrak{g} = \mathbb{L}(a_1, \dots, a_n)$, let $x_k = x_k(a_1,\dots,a_n, z_1, \dots, z_n) = x_k(z)$ be local solutions to \eqref{isomo}, such that they generate $\mathbb{L}(a_1, \dots, a_n)$ in each fiber. We call $(x_1, \dots, x_n)$ isomonodromic coordinates. 

Let $\gamma \subset \Sigma^{(z)}$ be a closed curve. It defines a local flat section $[\gamma]$ of $\mathcal{G}_n$ and  by Theorem \ref{KZ} it gives rise to a local flat section $|W_\gamma|$ of $\mathcal{H}_n$. By the considerations above, $|W_\gamma|$ is represented by a constant  (independent of $z_1, \dots, z_n$) function of the isomonodromic coordinates $(x_1, \dots, x_n)$.

\end{Rem}


%
%


\subsection{The moduli space of curves}
Recall that the moduli space of genus zero curves with $n+1$ marked points $\mathcal{M}_{0, n+1}$ can be defined as the quotient of the configuration space of $n+1$ points on $\mathbb{C}P^1$ modulo the natural ${\rm PSL}_2$-action by M\"obius transformations:
$$
\mathcal{M}_{0, n+1} = {\rm Conf}_{n+1}(\mathbb{C}P^1)/{\rm PSL}_2.
$$
Equivalently, one can fix one of the points to be at infinity of $\mathbb{C}P^1$ and consider:
$$
\mathcal{M}_{0, n+1} \cong {\rm Conf}_n(\mathbb{C})/\Gamma, 
$$
where  $\Gamma$ is the group of translations and dilations
$$
\Gamma= \{ z \mapsto az+b; a \in \mathbb{C}^*, b \in \mathbb{C}\}.
$$

The action of the group $\Gamma$ lifts to the bundles $\mathcal{G}_n$ and $\mathcal{H}_n$ as follows. The bundle $\mathcal{H}_n = {\rm Conf}_n \times |TH_1|$ is trivial and one extends the action of $\Gamma$ on ${\rm Conf}_n$ by the trivial action on the fibers. For the bundle $\mathcal{G}_n$, the group $\Gamma$ acts by diffeomorphisms $\Sigma^{(z)} \to \Sigma^{(az+b)}$, where $(az+b)=(az_1 +b, \dots, az_n+b)$. The action of $\Gamma$ on $\mathcal{G}_n$ is the induced action on the homotopy classes of curves 
$$
[\gamma(s)] \mapsto [\gamma_{a,b}(s)=a\gamma(s) +b].
$$


In conclusion, $\mathcal{G}_n$ and $\mathcal{H}_n$ become equivariant vector bundles under the action of $\Gamma$ and they give rise to vector bundles over $\mathcal{M}_{0, n+1}$ (that we again denote by $\mathcal{G}_n$ and $\mathcal{H}_n$).

The canonical flat connection on $\mathcal{G}_n$ descends from ${\rm Conf}_n$ to $\mathcal{M}_{0, n+1}$ because the action of $\Gamma$ maps flat sections to flat sections. It turns out that the KZ connection on $\mathcal{H}_n$ also descends to the moduli space:

\begin{Prop}
The flat connection 
$$
\nabla_n=d + \frac{1}{2\pi i} \sum_{i<j} {\rm ad}(t_{ij}) d\log(z_i-z_j)
$$
descends to a flat connection on $\mathcal{M}_{0, n+1}$.
\end{Prop}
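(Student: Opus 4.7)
The plan is to exploit the quotient description $\mathcal{M}_{0,n+1} = {\rm Conf}_n(\mathbb{C})/\Gamma$ together with the fact that $\Gamma$ acts trivially on the fibers of $\mathcal{H}_n$. A $\Gamma$-equivariant connection on the bundle $\mathcal{H}_n \to {\rm Conf}_n$ descends to the quotient bundle over $\mathcal{M}_{0,n+1}$ precisely when its connection 1-form
\begin{equation*}
A_n = \frac{1}{2\pi i}\sum_{i<j} {\rm ad}(t_{ij})\, d\log(z_i-z_j)
\end{equation*}
is $\Gamma$-invariant and vanishes on vectors tangent to the $\Gamma$-orbits. The first step will be to observe that the pull-back of $d\log(z_i - z_j)$ under $z \mapsto az+b$ equals $d\log(a(z_i - z_j)) = d\log(z_i - z_j)$, and that $\Gamma$ acts trivially on $|TH_1|$ and hence on the endomorphisms ${\rm ad}(t_{ij})$; $\Gamma$-invariance of $A_n$ is then immediate.

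The bulk of the argument will be to verify that $A_n$ annihilates the infinitesimal generators of the $\Gamma$-action, namely the translation field $T = \sum_i \partial_{z_i}$ and the Euler field $E = \sum_i z_i \partial_{z_i}$. A direct contraction yields $\iota_T d\log(z_i - z_j) = 0$ and $\iota_E d\log(z_i - z_j) = 1$, giving
\begin{equation*}
\iota_T A_n = 0, \qquad \iota_E A_n = \frac{1}{2\pi i} \sum_{i<j} {\rm ad}(t_{ij}) = \frac{1}{2\pi i}\, {\rm ad}(c),
\end{equation*}
where $c = \sum_{i<j} t_{ij}$ is the central element of $\mathfrak{t}_n$ recalled just before Remark \ref{krv}. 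The key point, and essentially the only step that is not purely formal, is that ${\rm ad}(c)$ acts on $\mathbb{L}(a_1, \dots, a_n)$ as the \emph{inner} derivation $a \mapsto [a, a_1 + \cdots + a_n]$; it therefore extends to an inner derivation of the completed Hopf algebra $TH_1$ and consequently acts as zero on the cyclic quotient $|TH_1|$. Hence $\iota_E A_n = 0$ as an endomorphism of $|TH_1|$, so both horizontality conditions hold.

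This will establish that $\nabla_n$ descends to a well-defined connection on $\mathcal{M}_{0,n+1}$, and flatness of the descended connection follows automatically from the flatness of $\nabla_n$ already verified on ${\rm Conf}_n$. The main obstacle is really just the Euler (dilation) calculation, and it is disposed of by precisely the centrality of $c \in \mathfrak{t}_n$ together with the inner-derivation identity recalled above.
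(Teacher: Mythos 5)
Your proposal is correct and follows essentially the same route as the paper: check $\Gamma$-invariance of the connection 1-form, then verify horizontality by contracting with the translation and Euler vector fields, the key point being that $\iota_E A_n = \frac{1}{2\pi i}\,{\rm ad}(c)$ with $c=\sum_{i<j}t_{ij}$ acting by an inner derivation and hence by zero on $|TH_1|$. This is exactly the paper's argument, spelled out slightly more explicitly on the translation part.
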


\begin{proof}
It is obvious that the connection 1-form is basic under the action of the group of translations $z \mapsto z+b$ and invariant under the action of dilations $z\mapsto \lambda z$. In order to see that it is horizontal under the action of dilations, define the Euler vector field $v=\sum_{i=1}^n z_i \partial_i$. Contracting it with the connection 1-form yields
$$
\iota(v) \sum_{i<j} {\rm ad}(t_{ij}) d\log(z_i-z_j) = \sum_{i<j} {\rm ad}(t_{ij}) = {\rm ad}(c) = 0,
$$
where we have used the fact that $c$ acts by zero on $|TH_1|$.
\end{proof}

Finally, the holonomy map descends to a bundle map over the moduli space as well:

\begin{Prop}
The holonomy map $|W|$ descends to $\mathcal{M}_{0, n+1}$.
\end{Prop}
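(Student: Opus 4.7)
The plan is to prove $\Gamma$-equivariance of the bundle map $|W|$ by a direct change of variables in the iterated integrals defining the holonomy. Since the $\Gamma$-action on the fibers of $\mathcal{H}_n$ is trivial, what we need to establish is the identity
$$
|W^{(az+b)}_{\gamma_{a,b}}| = |W^{(z)}_\gamma| \quad \text{in } |TH_1|,
$$
for every $(a,b) \in \Gamma$ and every closed curve $\gamma \subset \Sigma^{(z)}$.

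The key observation is that the connection 1-form $A(z) = \frac{1}{2\pi i}\sum_i a_i\, d\log(z-z_i)$ is invariant under the simultaneous transformation $z \mapsto az+b$ of both the integration variable and the poles. Indeed, under this substitution
$$
d\log\bigl((az+b) - (az_i+b)\bigr) = d\log\bigl(a(z-z_i)\bigr) = d\log(z-z_i),
$$
since $d\log$ annihilates the constant $a$. Hence if $\phi_{a,b}\colon \Sigma^{(z)} \to \Sigma^{(az+b)}$ denotes the diffeomorphism $z \mapsto az+b$, then $\phi_{a,b}^* A^{(az+b)} = A^{(z)}$.

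The proof then proceeds by substituting this invariance into the iterated integral defining the holonomy. First I would write, for the path $\gamma_{a,b} = \phi_{a,b}\circ \gamma$,
$$
{\rm Hol}^{(az+b)}_{\gamma_{a,b}} = \sum_{k \geq 0} \int_{1 \geq t_1 \geq \cdots \geq t_k \geq 0} A^{(az+b)}\bigl(\gamma_{a,b}(t_1)\bigr)\cdots A^{(az+b)}\bigl(\gamma_{a,b}(t_k)\bigr),
$$
and then apply the pullback identity termwise to recognize each factor as $A^{(z)}(\gamma(t_\ell))$. This yields the equality of group-like elements
$$
{\rm Hol}^{(az+b)}_{\gamma_{a,b}} = {\rm Hol}^{(z)}_\gamma \quad \text{in } G_n,
$$
and projecting to $|TH_1|$ gives the required equivariance. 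Compatibility of the trivializations used to identify the fibers of $\mathcal{G}_n$ over $z$ and $az+b$ is automatic, since the diffeomorphism $\phi_{a,b}$ carries small disks around the $z_i$ to small disks around the $az_i+b$ and induces the action on homotopy classes of curves used to define the $\Gamma$-action on $\mathcal{G}_n$.

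I do not expect any genuine obstacle here: the whole statement reduces to the scale- and translation-invariance of the logarithmic derivative $d\log$, and the remaining content is bookkeeping of the trivializations. The only point worth being careful about is that the identification $a_i = t_{i(n+1)}$ used to define $|W|$ does not depend on $z$, so the trivial $\Gamma$-action on the fibers of $\mathcal{H}_n$ is indeed the correct target for the equivariance statement.
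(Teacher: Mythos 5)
Your proposal is correct and follows essentially the same route as the paper: the paper's proof likewise observes that the $\Gamma$-action carries the connection form $A^{(z)}$ to $A^{(az+b)}$ (equivalently, $\phi_{a,b}^*A^{(az+b)}=A^{(z)}$), deduces that $W^{(z)}_\gamma = W^{(az+b)}_{\gamma_{a,b}}$, and projects to $|TH_1|$. Your additional spelling-out of the $d\log$ invariance and the termwise substitution in the iterated integrals is just a more explicit version of the same argument.
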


\begin{proof}
The group $\Gamma$  acts on the complex plane by diffeomorphisms and the connection 

$$
A^{(z_1, \dots, z_n)} = \frac{1}{2\pi i} \sum_{i=1}^n a_i d\log(z-z_i)
$$
is mapped to $A^{(az +b)}$. Hence,  the holonomy map $W^{(z)}$ evaluated on a curve $\gamma: [0,1] \to \Sigma^{(z)}$ coincides with the holonomy map
$W^{(az+b)}$ evaluated on the curve $\gamma_{a,b}: [0,1] \to \Sigma^{(az+b)}$. Applying projection $TH_1 \to |TH_1|$, we conclude that $|W|$ intertwines the actions of $\Gamma$ on $\mathcal{G}_n$ and $\mathcal{H}_n$. 

\end{proof}

We can summarize the statements above as follows:
\begin{Thm} \label{moduli_final}
The flat bundles with connections $\mathcal{G}_n$ and $\mathcal{H}_n$ and the bundle map $|W|$ over the configuration space ${\rm Conf}_n$ descend to the moduli space of genus zero curves $\mathcal{M}_{0, n+1}$.
\end{Thm}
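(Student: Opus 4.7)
The theorem is essentially a consolidation of the preceding propositions and the discussion of the $\Gamma$-action, so my plan is to assemble rather than re-derive. The three items to descend are: (i) the bundle $\mathcal{G}_n$ together with its canonical flat connection, (ii) the bundle $\mathcal{H}_n$ together with the KZ connection $\nabla_n$, and (iii) the bundle map $|W|$ intertwining them. I would organize the proof as three short verifications, in each case checking $\Gamma$-equivariance of the structure and basicness (plus horizontality) of the relevant connection 1-form.

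First I would dispatch $\mathcal{G}_n$. The $\Gamma$-action on ${\rm Conf}_n$ lifts to $\mathcal{G}_n$ via the diffeomorphisms $\Sigma^{(z)} \to \Sigma^{(az+b)}$, $\gamma \mapsto \gamma_{a,b}$, and this is a bundle map covering the base action, so $\mathcal{G}_n$ descends to an orbifold vector bundle over $\mathcal{M}_{0,n+1}$. For the connection, local flat sections were defined by homotopy classes of curves in $\tilde{\Sigma}$, and the map $[\gamma] \mapsto [\gamma_{a,b}]$ carries such sections to sections of the same type; hence $\Gamma$ permutes flat sections and the canonical connection descends. Second, for $\mathcal{H}_n$ and $\nabla_n$ I would simply invoke the Proposition already proved (the Euler vector field argument showing $\iota(v)\nabla_n = {\rm ad}(c) = 0$, plus translation- and dilation-invariance of the connection 1-form).

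Third, for $|W|$ I would invoke the second Proposition (which shows that $|W^{(z)}|$ evaluated on $\gamma$ agrees with $|W^{(az+b)}|$ evaluated on $\gamma_{a,b}$, i.e.\ that $|W|$ is $\Gamma$-equivariant), so $|W|$ descends to a bundle map over $\mathcal{M}_{0,n+1}$. The only additional remark needed is that the descended bundle map still intertwines the descended connections: this follows from Theorem \ref{KZ} (which asserts the intertwining upstairs on ${\rm Conf}_n$) together with $\Gamma$-equivariance, since an equivariant intertwiner of equivariant connections descends to an intertwiner of the quotient connections.

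The main obstacle, if any, is purely notational: one should be careful that $\mathcal{M}_{0,n+1}$ is a smooth quotient (which it is, since $\Gamma$ acts freely on ${\rm Conf}_n$ for $n \geq 3$; for $n=1,2$ the statement is trivial or should be interpreted in the orbifold sense), so that ``descent'' is unambiguous. Beyond this, no new computations are required: the proof is a three-line assembly citing the two Propositions above and the paragraph preceding them, together with the general fact that equivariant connections and equivariant bundle maps pass to quotients by free actions.
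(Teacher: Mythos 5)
Your proposal matches the paper exactly: Theorem \ref{moduli_final} is stated there as a summary (``We can summarize the statements above as follows'') with no separate proof, the content being precisely the assembly of the $\Gamma$-equivariance discussion for $\mathcal{G}_n$ and $\mathcal{H}_n$, the two Propositions on the descent of $\nabla_n$ and of $|W|$, and the observation that $\Gamma$ permutes flat sections of $\mathcal{G}_n$. (Minor aside: the $\Gamma$-action on ${\rm Conf}_n$ is already free for $n\geq 2$, since an affine map fixing two distinct points is the identity, so no orbifold caveat is needed there.)
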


\begin{Rem}
By Remark \ref{krv}, the pure braid group ${\rm PB}_n$ (the fundamental group of the configuration space ${\rm Conf}_n$) acts on $|TH_1|$ by automorphisms of the necklace Schedler Lie bialgebra structure. Theorem \ref{moduli_final} shows that this action only depends on the complex structure on the surface (that is, on the point of $\mathcal{M}_{0, n+1}$).

\end{Rem}

%
%
%
%
%

\section{Relation to the KZ associator and the Kashiwara-Vergne problem}

In this section we establish the relation between the map $|W|: |\mathbb{C}\pi_1| \to TH_1$ and the solution of the Kashiwara-Vergne problem defined by the KZ associator.

Recall the following statements:

\begin{Thm}[Theorem 5 in \cite{AEM}]
For every Drinfeld associator $\Phi(x,y)$ the automorphism $F_\Phi$ of the free Lie algebra $\mathcal{L}(x,y)$ defined by formulas
\begin{align*}
x & \mapsto \Phi(x, -x-y) x \Phi(x, -x-y)^{-1}, \\
y & \mapsto e^{-(x+y)/2} \Phi(y, -x-y) y \Phi(y, -x-y)^{-1} e^{(x+y)/2}
\end{align*}
is a solution of the Kashiwara-Vergne problem.
\end{Thm}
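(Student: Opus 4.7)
The plan is to verify the two Kashiwara--Vergne equations satisfied by $F_\Phi$: the group-law equation (KV1) $F(x)+F(y) = \log(e^x e^y)$, and the divergence/Jacobian equation (KV2), which constrains the noncommutative divergence of $F$ in $|T(x,y)|$ in terms of a single series $r \in u\mathbb{C}[[u]]$. Both equations will be deduced from the hexagon and pentagon identities satisfied by any Drinfeld associator $\Phi$.

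For (KV1) I would embed $\widehat{\mathcal{L}}(x,y)$ into the completion of $U(\mathfrak{t}_3)$ via $x = t_{12}$, $y = t_{23}$, so that modulo the central element $c = t_{12}+t_{13}+t_{23}$ one has $-x-y \equiv t_{13}$. The defining formulas for $F_\Phi$ then read as explicit conjugation identities inside $U(\mathfrak{t}_3)$. Drinfeld's hexagon equation, together with the fact that $c$ is central, allows one to compute the product $e^{F_\Phi(x)} e^{F_\Phi(y)}$ directly and match it with $e^{x+y}$, which yields (KV1) after taking logarithms. This step is essentially the classical computation that $\Phi$ intertwines the two natural maps $\mathcal{L}(x,y) \to \mathfrak{t}_3$ sending $(x,y)$ to $(t_{12}, t_{23})$ versus $(t_{12}+t_{13}, t_{23})$.

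For (KV2) the tangential structure of $F_\Phi$ is recorded by the pair $(a,b)$ with $a = \log \Phi(x,-x-y)$ and $b$ the BCH combination $-(x+y)/2 + \log \Phi(y,-x-y) + (x+y)/2$. One has to compute the noncommutative divergence $j(F_\Phi) \in |T(x,y)|$ of the associated tangential automorphism and show that it is supported on the three cyclic classes $|e^x|$, $|e^y|$, $|e^{x+y}|$ with matching coefficients. The driving identity is the pentagon equation for $\Phi$ in $U(\mathfrak{t}_4)$: its projection to cyclic words under the trace map $\Tr$ is precisely the cocycle whose splitting along the three ``diagonal'' classes produces the right-hand side of (KV2), with $r$ identified in terms of the even part of $\log \Phi$.

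The main obstacle will be the divergence computation in (KV2). The shift $e^{\pm (x+y)/2}$ in the formula for $F_\Phi(y)$ introduces BCH corrections that interact nontrivially with the divergence of the $\Phi$-conjugation, and keeping track of the cancellations predicted by the pentagon is technically involved. Once (KV1) is in place the possible shape of $j(F_\Phi)$ is rigidified by the tangential cocycle structure, and the pentagon supplies the one nontrivial relation needed to pin down $r$. Throughout, the only inputs are the hexagon and pentagon axioms, which is why the argument applies uniformly to every Drinfeld associator.
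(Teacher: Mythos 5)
This theorem is imported into the paper as a black box: it is quoted from \cite{AEM} and no proof is given in the text, so there is no internal argument to compare against. Measured against the actual proof in \cite{AEM}, your high-level strategy is the right one --- the hexagon relations in $\mathfrak{t}_3$ (with $x=t_{12}$, $y=t_{23}$, $t_{13}\equiv -x-y$ modulo the central element) give the first Kashiwara--Vergne equation, and the pentagon, fed through the noncommutative divergence cocycle, gives the second. Two points, however, need attention. First, your formulation of (KV1) has the automorphism pointing the wrong way: for the $F_\Phi$ defined by the displayed formulas one checks already in degree $2$ that $F_\Phi(x)+F_\Phi(y)=x+y-\tfrac12[x,y]+\dots$, whereas $\log(e^xe^y)=x+y+\tfrac12[x,y]+\dots$; the equation the hexagon actually yields is $e^{F_\Phi(x)}e^{F_\Phi(y)}=e^{x+y}$, i.e.\ $F_\Phi(\log(e^xe^y))=x+y$, equivalently your identity holds for $F_\Phi^{-1}$. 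This is a convention slip rather than a fatal error, but it would propagate into the divergence equation, where the cocycle property $j(gh)=j(g)+g\cdot j(h)$ makes the direction of the automorphism matter.

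Second, and more seriously, the (KV2) part is a plan rather than a proof, and the plan omits exactly the steps that carry the weight in \cite{AEM}: (a) that the divergence $j$ is a group cocycle on tangential automorphisms; (b) that the associator defines a tangential automorphism in $\mathrm{TAut}_3$ whose divergence $j(\Phi)$ satisfies, by the pentagon, a simplicial cocycle identity in the trace space for four strands; and (c) the cohomological computation showing that the solutions of that identity compatible with the hexagon constraints are exactly the combinations $\langle r(x)\rangle+\langle r(y)\rangle-\langle r(\log(e^xe^y))\rangle$. Saying that the pentagon ``supplies the one nontrivial relation needed to pin down $r$'' presupposes (c), which is a genuine lemma and not a formal consequence of the tangential structure; likewise the identification of $r$ with ``the even part of $\log\Phi$'' is not correct as stated (only the even part of $r$ is constrained by the equations, and for $\Phi_{\rm KZ}$ the natural associated series involves all zeta values through a $\log\Gamma$-type expansion). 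As it stands the proposal correctly locates where the difficulty lies but does not resolve it, so it should be read as a roadmap to the argument of \cite{AEM} rather than as an independent proof.
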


For the second statement we need the following notation: let $\Sigma= \mathbb{C}\backslash \{ 0, 1\}$, choose a base point $p \in [0,1]$ and two generators $\gamma_0, \gamma_1 \in \pi_1$ as shown on Fig. \ref{fig1}.

\begin{figure}
    \centering
    
\begin{tikzpicture} 
\fill[black] (0,0) circle (0.05) node[above left]{$0$}; 
\fill[black] (2,0) circle (0.05) node[above left]{$1$}; 
\draw (2,0)  node[above left] {$1$};
\draw (0.4,0) -- (0.6, 0) ;
\draw (0.5,-0.1) -- (0.5, 0.1) ;
\draw (0.5,0) .. controls (1.5,1) and (3,1) .. (3, 0) node[pos=0, above left] {$p$} node[pos=.3, above] {$\gamma_0$};
\draw[->] (0.5,0) .. controls (1.5,-1) and (3,-1) .. (3, 0);

\draw[->] (0.5,0) .. controls (0.5,1) and (-0.5,1) .. (-0.5, 0) node[pos=.3, above] {$\gamma_1$};
\draw (0.5,0) .. controls (0.5,-1) and (-0.5,-1) .. (-0.5, 0);
\end{tikzpicture}

    \caption{Definition of $\gamma_0$ and $\gamma_1$}
    \label{fig1}
\end{figure}
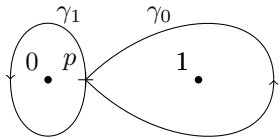

\begin{Thm}[Theorem 7.6 in \cite{genus0}]
Let $F$ be an automorphism of the free Lie algebra $\mathcal{L}(x,y)$ which solves the Kashiwara-Vergne problem. Then, the map $\rho_F: \mathbb{C} \pi_1 \to TH_1$ defined on generators by
$$
\rho_F(\gamma_0) = \exp(F(x)), \hskip 0.3cm \rho_F(\gamma_1)= \exp(F(y))
$$
induces an isomorphism of Lie bialgebras $|\mathbb{C} \pi_1| \to |TH_1|$.
\end{Thm}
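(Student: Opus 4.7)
The plan is to reduce the statement to the specific Kashiwara-Vergne solution $F_{\Phi_{\mathrm{KZ}}}$ coming from the KZ associator, and then propagate to all other KV solutions. For that specific case, I would identify $\rho_{F_{\Phi_{\mathrm{KZ}}}}$ on the nose with the holonomy map $W\colon \mathbb{C}\pi_1\to TH_1$ of this paper, applied to $\Sigma=\mathbb{C}\setminus\{0,1\}$ and the loops $\gamma_0,\gamma_1$ of Figure \ref{fig1} (under the identification $a_0=x$, $a_1=y$). Since our Theorem \ref{main} already shows $|W|$ is a Lie bialgebra isomorphism, this step is sufficient for $F=F_{\Phi_{\mathrm{KZ}}}$.

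The identification $\rho_{F_{\Phi_{\mathrm{KZ}}}}=W$ is the central computation. I would carry it out by deforming each $\gamma_i$ so that its portion near the puncture shrinks toward a tangential base point on the real interval $[0,1]$, and decomposing the loop into (i) a small positively oriented circle about the puncture, whose iterated-integral holonomy is $e^x$ or $e^y$; (ii) a segment of $[0,1]$ between two tangential base points, whose regularized holonomy is by definition the KZ associator; and (iii) transport to and from the actual basepoint $p$, which drops out on $|TH_1|$. Extending the connection \eqref{connection} to $\mathbb{C}P^1$ gives residue $-x-y$ at infinity, which is the reason the second argument of the associator appears as $-x-y$ in the AEM formulas. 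Matching the resulting expressions to $\exp(F_{\Phi_{\mathrm{KZ}}}(x))$ and $\exp(F_{\Phi_{\mathrm{KZ}}}(y))$ completes the identification.

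For an arbitrary KV solution $F$, I would use that the set of KV solutions is a torsor under the Kashiwara-Vergne group $\mathrm{KRV}$, whose Lie algebra $\mathfrak{krv}$ acts on $|TH_1|$ by derivations of the necklace Schedler Lie bialgebra structure (cf.~Remark \ref{krv}). Writing $F$ as a $\mathrm{KRV}$-twist of $F_{\Phi_{\mathrm{KZ}}}$ expresses $\rho_F$ on $|\cdot|$ as the composition of $\rho_{F_{\Phi_{\mathrm{KZ}}}}$ with a Lie bialgebra automorphism of $|TH_1|$, so the isomorphism property is transported from $F_{\Phi_{\mathrm{KZ}}}$ to $F$.

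The main obstacle is the tangential regularization underlying the identification $\rho_{F_{\Phi_{\mathrm{KZ}}}}=W$. The iterated integrals defining holonomies of loops approaching the punctures diverge logarithmically, and one must set up the regularization so that the specific arguments $(x,-x-y)$ and $(y,-x-y)$ appear in the associator factors in a way consistent with the AEM formulas. Once the regularization, the contribution from the pole at infinity, and the conjugations from base-point transport are accounted for, the identification is a bookkeeping exercise.
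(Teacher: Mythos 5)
The paper does not actually prove this statement: it is quoted verbatim as Theorem 7.6 of \cite{genus0}, where the proof is a direct algebraic verification that the Kashiwara--Vergne equations for $F$ are exactly the conditions needed to intertwine the Goldman bracket and Turaev cobracket with the necklace Schedler structure. Your route is genuinely different: you specialize to $F=F_{\Phi_{\rm KZ}}$, identify $|\rho_{F_{\Phi_{\rm KZ}}}|$ with $|W|$ (this is precisely the content of Section 4 of the paper, run in the opposite logical direction: the Lemma identifies $W$ with $\rho_{\rm KZ}$ via the fundamental solutions $\Psi_0,\Psi_1$ and the relation $\Psi_1(p)=\Psi_0(p)\Phi_{\rm KZ}$, and the change of base point to $\infty$ produces the arguments $(x,-x-y)$ and $(y,-x-y)$), and then invoke Theorem \ref{main} to get the isomorphism property for this one solution. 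That part is sound, modulo the tangential-base-point bookkeeping you defer, and it buys an analytic proof for the KZ solution that the algebraic argument of \cite{genus0} does not give.

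The gap is in the propagation step. To pass from $F_{\Phi_{\rm KZ}}$ to an arbitrary KV solution $F=h\circ F_{\Phi_{\rm KZ}}$ with $h\in{\rm KRV}$, you need that $|h|$ is an automorphism of the \emph{necklace Schedler Lie bialgebra} $|TH_1|$, not merely of the vector space. That statement is itself one of the main theorems of \cite{genus0} (the characterization of $\mathfrak{krv}$ via the divergence cocycle as derivations of the cobracket), and it is of essentially the same depth as the theorem you are trying to prove; this note only quotes its infinitesimal version, and only for the image of $\mathfrak{t}_n$, in Remark \ref{krv}, without proof. So as written your argument reduces Theorem 7.6 to another unproven theorem of the same source rather than establishing it. To close the gap you would have to either (i) prove directly that the defining equations of $\mathfrak{krv}_2$ (tangential derivations killing $x+y$ up to inner ones, plus the divergence condition) imply preservation of the necklace bracket and cobracket, and then integrate to the group, or (ii) abandon the torsor reduction and verify the intertwining for general $F$ directly, which is what \cite{genus0} does. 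You should also record that you are using the AEM theorem to know that $F_{\Phi_{\rm KZ}}$ is a point of ${\rm SolKV}$ at all, and check that the framing implicit in Theorem 7.6 matches the blackboard framing of Theorem \ref{main}.
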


We will now  prove the following proposition:

\begin{Prop}
For $\Sigma = \mathbb{C} \backslash \{ 0, 1\}$ the Lie bialgebra isomorphism $|W|$ coincides with the one induced by the solution of the Kashiwara-Vergne problem corresponding to the KZ associator $\Phi_{\rm KZ}(x,y)$.
\end{Prop}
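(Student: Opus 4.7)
The plan is to verify that $W$ and $\rho_{F_{\Phi_{\rm KZ}}}$ coincide, as algebra homomorphisms $\mathbb{C}\pi_1 \to TH_1$, on the free generators $\gamma_0$ and $\gamma_1$. Both maps are determined by their values on these two elements, and since inner automorphisms of $TH_1$ act trivially on cyclic words, it is enough to identify $W(\gamma_0)$ with $\exp(F_{\Phi_{\rm KZ}}(x))$ and $W(\gamma_1)$ with $\exp(F_{\Phi_{\rm KZ}}(y))$ up to a common conjugation.

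First, regard the KZ connection as a Fuchsian connection on $\mathbb{C}P^1$ with singularities at $0, 1, \infty$ and residues $a_0, a_1, -(a_0+a_1)$. On the simply connected slit domain $\mathbb{C} \setminus ((-\infty, 0] \cup [1, \infty))$, introduce the three canonical fundamental solutions $G_0, G_1, G_\infty$ of $dG = AG$, each characterized by a prescribed asymptotic behaviour $G_0(z) \sim z^{a_0/2\pi i}$, $G_1(z) \sim (1-z)^{a_1/2\pi i}$, $G_\infty(z) \sim z^{-(a_0+a_1)/2\pi i}$, with local monodromies $e^{a_0}$, $e^{a_1}$, $e^{-(a_0+a_1)}$. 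By Drinfeld's classical definition, the transition element between $G_0$ and $G_1$ on $(0,1)$ equals $\Phi_{\rm KZ}$ evaluated at the appropriate rescaled residues, and the analogous relations between the other pairs involve specialisations of $\Phi_{\rm KZ}$ with cyclically permuted arguments together with half-monodromy factors arising from branch conventions at the three punctures.

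Next, compute $W(\gamma_0)$ and $W(\gamma_1)$. For each generator, decompose the loop (up to homotopy) as a transport from $p$ to a tangential base point near the enclosed puncture, followed by the local monodromy $e^{a_\bullet}$, followed by the inverse transport. Both loops in Figure \ref{fig1} approach their puncture from the $\infty$ side (on $+\infty$ for $\gamma_0$, on $-\infty$ for $\gamma_1$), so the transport is naturally expressed in terms of $G_\infty$. Substituting the transition identities from the preceding step expresses each $W(\gamma_i)$ as a conjugate of $e^{a_\bullet}$ by a specialisation $\Phi_{\rm KZ}(a_\bullet, -(a_0+a_1))$, possibly dressed by a half-monodromy around $\infty$. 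The argument structure $\Phi(x, -x-y)$, $\Phi(y, -x-y)$ in the AEM formulas records this $\infty$-side choice of path, and the factor $e^{\pm(x+y)/2}$ in $F(y)$ is the half-monodromy around $\infty$ relating the two $\infty$-side tangential base points at $0$ and $1$, which lie in opposite directions along the real line.

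The main obstacle is the careful bookkeeping of these half-monodromy factors at $\infty$: one must verify that the branch conventions used to define $\Phi_{\rm KZ}$ in \cite{AEM}, combined with the specific representatives of $\gamma_0, \gamma_1$ chosen in Figure \ref{fig1}, reproduce precisely the asymmetric formula for $F_\Phi$ of Theorem 5 in \cite{AEM}, rather than a variant related to it by an inner automorphism of $TH_1$ or a different specialisation of the associator's arguments.
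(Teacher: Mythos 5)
Your overall strategy is the right one and is essentially the paper's: realize $W$ via Drinfeld's fundamental solutions of the KZ equation, use the characterization of $\Phi_{\rm KZ}$ as a transition element between them, and reduce the comparison with $\rho_{F_{\Phi_{\rm KZ}}}$ to an identification of the two representations up to a \emph{common} conjugation (which indeed suffices, since a common inner automorphism acts trivially on $|TH_1|$). However, as written the proposal has a genuine gap, and you name it yourself in the last paragraph: the entire mathematical content of the proposition is the verification that the transports expressed through $G_\infty$, together with the branch conventions and half-monodromy factors at $0$, $1$ and $\infty$, produce exactly the conjugators $\Phi(x,-x-y)$ and $e^{-(x+y)/2}\Phi(y,-x-y)$ of the AEM formula with a single common residual conjugation. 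You assert that this ``must be verified'' but do not verify it; the words ``possibly dressed by a half-monodromy'' are precisely where a sign, an argument permutation, or a non-common conjugating element could creep in and change the induced map on $|\mathbb{C}\pi_1|$. (Your reading of Figure \ref{fig1} is also not obviously consistent with the claim that both generators approach their punctures ``from the $\infty$ side''; the small loop around one of the punctures stays near the segment $[0,1]$.)

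The paper closes this gap by factoring the comparison into two steps, each of which is checkable with no bookkeeping at $\infty$. First, using only the two solutions $\Psi_0(z)\sim z^{x/2\pi i}$ and $\Psi_1(z)\sim (z-1)^{y/2\pi i}$ and the single defining relation $\Psi_1(p)=\Psi_0(p)\Phi_{\rm KZ}(x,y)$, one shows that $W$ is conjugate by $\Psi_0(p)$ to the representation $\rho_{\rm KZ}(\gamma_0)=e^x$, $\rho_{\rm KZ}(\gamma_1)=\Phi_{\rm KZ}^{-1}e^y\Phi_{\rm KZ}$; no monodromy at $\infty$ enters. Second, $\rho_{\rm KZ}$ is related to the AEM representation $\rho$ by conjugation with the \emph{explicit single element} $e^{x/2}\Phi(-x-y,x)$ (moving the base point to a neighborhood of $\infty$); because this is one common conjugation, the induced maps on $|\mathbb{C}\pi_1|$ agree automatically. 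If you want to carry out your direct-at-$\infty$ computation instead, you must actually prove the transition identities relating $G_0$, $G_1$, $G_\infty$ (the hexagon-type relations among $\Phi(x,y)$, $\Phi(x,-x-y)$, $\Phi(-x-y,x)$ and the factors $e^{\pm x/2}$, $e^{\pm(x+y)/2}$) and then check that the two resulting conjugators differ from the AEM ones by the same element on both generators. Until that is done, the argument is a plan rather than a proof.
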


Let $n=2$ and $z_1=0, z_2=1$. Denote $x=a_1, y=a_2$. The connection $d+A$ acquires the form
$$
d+A = d + \frac{1}{2\pi i} \left(x \, d \, \log(z) + y \, d\, \log(z-1)\right).
$$
Following Drinfeld \cite{Drinfeld}, define fundamental solutions of the equation $(d+A)\Psi=0$ with asymptotics
$$
\Psi_0(z) = (1 + O(z))z^{\frac{x}{2\pi i}}, \hskip 0.3cm
\Psi_1(z)= (1 + O(z))(z-1)^{\frac{y}{2\pi i} }.
$$
Choose the base point $p \in [0,1]$ and a basis in $\pi_1$ represented by the curves $\gamma_0$ which surrounds $0$ and $\gamma_1$ which surrounds $1$. Define the representation $\rho_{\rm KZ}: \pi_1 \to G_2$ as follows:
$$
\rho_{\rm KZ}(\gamma_0) = e^x, \hskip 0.3cm
\rho_{\rm KZ}(\gamma_1) = \Phi_{\rm KZ}(x,y)^{-1} e^y \Phi_{\rm KZ}(x,y),
$$
where $\Phi_{\rm KZ}$ is the KZ associator.

\begin{Lem}
The representations $\rho_{\rm KZ}$ and $W$ are equivalent under conjugation by $\Psi_0(p)$. The corresponding maps $|\mathbb{C}\pi_1| \to TH_1$ coincide.
\end{Lem}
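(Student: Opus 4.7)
The plan is to express the iterated-integral holonomy $W(\gamma)$ in terms of the fundamental solutions $\Psi_0$ and $\Psi_1$ and then read off the monodromies around $\gamma_0$ and $\gamma_1$ from the asymptotics of these solutions together with the defining identity $\Psi_0(z)=\Psi_1(z)\,\Phi_{\rm KZ}(x,y)$ on $(0,1)$ for the KZ associator. The standard Picard-iteration argument identifies ${\rm Hol}_\gamma$ for a loop $\gamma$ based at $p$ with a quotient of the form $\Psi(p)^{-1}\Psi^{\gamma}(p)$, where $\Psi^\gamma$ denotes the analytic continuation of $\Psi$ along $\gamma$; equivalently, when the monodromy is transported from the asymptotic frame near $0$ (or $1$) to the base point $p$, it picks up a conjugation by $\Psi_0(p)$ (or $\Psi_1(p)$). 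This is the only place where the factor $\Psi_0(p)$ enters.

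Applied to $\gamma_0$: since $\Psi_0(z)=(1+O(z))z^{x/(2\pi i)}$ and $x$ commutes with $z^{x/(2\pi i)}$, analytic continuation once counterclockwise around $0$ multiplies $\Psi_0$ by $e^x$, giving
\[
W(\gamma_0)=\Psi_0(p)\,e^x\,\Psi_0(p)^{-1}.
\]
Applied to $\gamma_1$: the analogous argument with $\Psi_1(z)=(1+O(z-1))(z-1)^{y/(2\pi i)}$ yields monodromy $e^y$ in the $\Psi_1$-frame; the associator identity then converts this into the $\Psi_0$-frame,
\[
W(\gamma_1)=\Psi_0(p)\,\Phi_{\rm KZ}(x,y)^{-1}\,e^y\,\Phi_{\rm KZ}(x,y)\,\Psi_0(p)^{-1}=\Psi_0(p)\,\rho_{\rm KZ}(\gamma_1)\,\Psi_0(p)^{-1}.
\]
Since $\pi_1(\Sigma,p)$ is freely generated by $\gamma_0$ and $\gamma_1$, the representations $W$ and $\rho_{\rm KZ}$ into $G_2$ differ by the inner automorphism of the completed Hopf algebra $TH_1$ given by conjugation by $\Psi_0(p)$, which is the first half of the lemma.

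The second half is then immediate: inner automorphisms of $TH_1$ descend to the identity on the commutator quotient $|TH_1|=TH_1/[TH_1,TH_1]$, so $|W|$ and the map induced by $\rho_{\rm KZ}$ agree as maps $|\mathbb{C}\pi_1|\to|TH_1|$. The main obstacle to watch is convention bookkeeping: the differential formula $d\,{\rm Hol}_\gamma=A(z(1)){\rm Hol}_\gamma-{\rm Hol}_\gamma A(z(0))$ makes ${\rm Hol}$ a left fundamental solution in the endpoint and a right one in the startpoint, so one must fix the convention for $\Psi_0$ (left- vs. right-solution of $(d+A)\Psi=0$) so that the conjugation genuinely appears on the correct side in the two formulas above. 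This is routine but is the step most prone to sign errors, and it is also where the tangential-base-point discussion near $0$ and $1$ would need to be invoked if the curves $\gamma_0,\gamma_1$ were allowed to touch the punctures.
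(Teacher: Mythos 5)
Your proof is correct and follows essentially the same route as the paper: read off the monodromies $e^x$ and $e^y$ from the asymptotic frames of $\Psi_0$ and $\Psi_1$, transport them to the base point $p$ via conjugation by the fundamental solution, use the associator to pass between the two frames, and conclude by noting that conjugation acts trivially on the commutator quotient $|TH_1|$. The normalization $\Psi_0=\Psi_1\,\Phi_{\rm KZ}$ you use is in fact the one needed for the final identification $\Psi_1(p)\Phi_{\rm KZ}=\Psi_0(p)$ in the paper's own computation, so your convention bookkeeping is consistent.
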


\begin{proof}
By definition of the holonomy,
$$
\Psi_0(\gamma_0 \cdot p) = {\rm Hol}_{\gamma_0} \Psi_0(p), \hskip 0.3cm
\Psi_1(\gamma_1 \cdot p) = {\rm Hol}_{\gamma_1} \Psi_1(p).
$$
In combination with 
$$
\Psi_0(\gamma_0 \cdot p) = \Psi_0(p) e^x, \hskip 0.3cm
\Psi_1(\gamma_1 \cdot p) = \Psi_1(p) e^y, \hskip 0.3cm
\Psi_1(p) = \Psi_0(p) \Phi_{\rm KZ}(x,y)
$$
we obtain
$$
\rho_{\rm KZ}(\gamma_0) = e^x = \Psi_0(p)^{-1} \Psi_0(\gamma_0 \cdot p) = \Psi_0(p)^{-1} {\rm Hol}_{\gamma_0} \Psi_0(p)
$$
and
\begin{align*}
\rho_{\rm KZ}(\gamma_1) & =  
\Phi_{\rm KZ}^{-1}e^y \Phi_{\rm KZ}
= \Phi_{\rm KZ}^{-1}\Psi_1(p)^{-1} \Psi_1(\gamma_1 \cdot p) \Phi_{\rm KZ} \\
& = (\Psi_1(p) \Phi_{\rm KZ})^{-1} {\rm Hol}_{\gamma_1} (\Psi_1(p) \Phi_{\rm KZ}) =
\Psi_0(p)^{-1} {\rm Hol}_{\gamma_1} \Psi_0(p),
\end{align*}
as required. 

Since the two representations of $\pi_1$ are equivalent, they descend to the same map on $|\mathbb{C}\pi_1|$.

\end{proof}

\begin{proof}[Proof of Proposition]
To complete the proof of Proposition, we remark that the representation $\rho_{\rm KZ}$ is equivalent to the representation
\begin{align*}
\rho(\gamma_0) & =  \Phi(x, -x-y) e^x \Phi(x, -x-y)^{-1}, \\
\rho(\gamma_1) & =  e^{-(x+y)/2} \Phi(y, -x-y) e^y \Phi(y, -x-y)^{-1} e^{(x+y)/2}. 
\end{align*}

The equivalence is given by conjugation with $e^{x/2} \Phi(-x-y, x)$ which corresponds to moving the base point from the neighborhood of $0$ to the neighborhood of $\infty$.

The map $|\mathbb{C}\pi_1| \to | TH_1|$ induced by $\rho$ coincides with the one induced by the solution of the Kashiwara-Vergne problem corresponding to the KZ associator $\Phi_{\rm KZ}(x,y)$.
\end{proof}

\begin{Rem}
G. Massuyeau explained to us that for $n=2$ the Lie bialgebra isomorphism $|\mathbb{C} \pi_1|\to |TH_1|$ constructed in \cite{mas} using the Kontsevich integral coincides with the map described above if one chooses the associator $\Phi= \Phi_{\rm KZ}$.
\end{Rem}

{\bf Acknowledgements.} This work was inspired by  discussions with R. Hain. We are grateful to P. Boalch, N. Kawazumi, M. Kontsevich, Y. Kuno, G. Massuyeau, P. Safronov, P. Severa and X. Xu for the interest in our results and for useful comments. Our work was supported in part by the ERC project MODFLAT, by the grants 165666 and 159581 of the Swiss National Science Foundation (SNSF) and by the NCCR SwissMAP.

\end{document}